\title{Resolving Grosswald's conjecture on GRH}
\author{Kevin McGown\footnote{Partially supported by a CSU Chico Internal Research Grant.}\\
Department of Mathematics and Statistics\\ California State University, Chico, CA, USA\\ kmcgown@csuchico.edu\\ \\
Enrique Trevi\~{n}o\\ Department of Mathematics and Computer Science\\ Lake Forest College, Lake Forest, USA\\ trevino@lakeforest.edu \\ \\
Tim Trudgian\footnote{Supported by Australian Research Council DECRA Grant DE120100173.}\\
Mathematical Sciences Institute\\ The Australian National University,
Canberra, Australia\\ timothy.trudgian@anu.edu.au
}
\newtheorem{thm}{Theorem}
\newtheorem{lem}{Lemma}
\newtheorem{cor}{Corollary}
\newcommand{\ZZ}{\mathbb Z}
\begin{document}

\maketitle
\begin{abstract}
\noindent
 In this paper we examine Grosswald's conjecture on $g(p)$, the least primitive root modulo $p$. Assuming the Generalized Riemann Hypothesis (GRH), and building on previous work by Cohen, Oliveira e Silva and Trudgian, we resolve Grosswald's conjecture by showing that $g(p)< \sqrt{p} - 2$ for all $p>409$. Our method also shows that under GRH we have $\hat{g}(p)< \sqrt{p}-2$ for all $p>2791$, where $\hat{g}(p)$ is the least prime primitive root modulo $p$. \end{abstract}

\section{Introduction}
Let $g(p)$ denote the least primitive root modulo $p$.
Burgess~\cite{Burgess} showed that $g(p)\ll p^{1/4+\epsilon}$ for any~$\epsilon>0$.
Grosswald~\cite{Grosswald81} conjectured that
\begin{equation}\label{gross}
  g(p) < \sqrt{p}-2,
\end{equation}
for all primes~$p>409$. Clearly, Burgess' result implies (\ref{gross}) for all sufficiently large $p$.  In~\cite{COT} it was shown that (\ref{gross}) is true for all $409<p\leq 2.5\cdot 10^{15}$ and for all $p> 3.38\cdot 10^{71}.$ In this paper, contingent on the Generalized Riemann Hypothesis (GRH) we prove (\ref{gross}) for the remaining values of $p$.  In fact, we prove a stronger result.
Let $\hat{g}(p)$ denote the least prime primitive root modulo $p$.
\begin{thm}\label{Main}
Assume GRH. Then $\hat{g}(p)< \sqrt{p}-2$ for all primes $p>2791$ and $g(p) < \sqrt{p}-2$ for all primes $p>409$.
\end{thm}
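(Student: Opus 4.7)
My plan is to combine the results of Cohen--Oliveira e Silva--Trudgian \cite{COT} (which dispose of $409<p\le 2.5\cdot 10^{15}$ and $p>3.38\cdot 10^{71}$ unconditionally) with explicit GRH bounds on character sums, so that only the window $(2.5\cdot 10^{15},\,3.38\cdot 10^{71}]$ requires attention. Throughout I will use the familiar indicator identity, valid for $\gcd(n,p)=1$:
$$\mathbf{1}_{n \text{ is a primitive root mod } p} \;=\; \frac{\phi(p-1)}{p-1}\sum_{d\mid p-1}\frac{\mu(d)}{\phi(d)}\sum_{\mathrm{ord}(\chi)=d}\chi(n).$$
Summing this over $n\le x$ (for $g(p)$) or over primes $q\le x$ (for $\hat g(p)$) and peeling off the principal character produces a main term equal to $\tfrac{\phi(p-1)}{p-1}\lfloor x\rfloor$ (resp.\ $\tfrac{\phi(p-1)}{p-1}\pi(x)$) together with a remainder that is a linear combination of twisted character sums running over all non-principal $\chi$ whose order is a squarefree divisor of $p-1$.

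Next I would invoke an explicit GRH-conditional bound on character sums. For the $g(p)$ assertion it suffices to use an estimate of the shape $|\sum_{n\le x}\chi(n)|\le c_1\sqrt{x}\log p$ for non-principal $\chi\pmod p$, and for the $\hat g(p)$ assertion the analogous bound $|\sum_{q\le x}\chi(q)\log q|\le c_2\sqrt{x}(\log px)^2$. Because each squarefree $d\mid p-1$ carries exactly $\phi(d)$ characters of order $d$, the weight $1/\phi(d)$ in the indicator cancels precisely, and the triangle inequality collapses the remainder to at most $(2^{\omega(p-1)}-1)\,E(x,p)$, where $E(x,p)$ is the single-character bound. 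The theorem then reduces to verifying
$$x \;>\; \bigl(2^{\omega(p-1)}-1\bigr)\,E(x,p)\qquad\text{at}\qquad x=\sqrt{p}-2.$$

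The principal obstacle is the factor $2^{\omega(p-1)}-1$, which can blow up when $p-1$ is highly composite. I would handle this by splitting into two regimes. When $\omega(p-1)$ is small relative to $\log p$, the required inequality follows directly from the crude size comparison $\sqrt{p}-2$ versus $2^{\omega(p-1)}\sqrt{\sqrt p}\log^{O(1)}p$. When $\omega(p-1)$ is large, the primorial inequality $p-1\ge 2\cdot 3\cdot 5\cdots p_{\omega(p-1)}$ forces $p$ itself to be extremely large, which in turn tightens the right-hand side. Making these two regimes meet cleanly across the interval $(2.5\cdot 10^{15},\,3.38\cdot 10^{71}]$ will require careful bookkeeping and, I expect, a short computer-assisted sweep through the finitely many intermediate values of $\omega(p-1)$ in order to eliminate the boundary cases entirely.

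Finally, the prime primitive root statement is obtained by running the identical argument with summation restricted to primes $q\le x$ and with the prime character-sum bound replacing its integer counterpart. The extra $\log$ factor in the GRH estimate on $\sum_{q\le x}\chi(q)$ is exactly what inflates the admissible threshold from $409$ in the $g(p)$ assertion to $2791$ in the $\hat g(p)$ assertion; a direct numerical verification handles the finitely many primes below the latter bound where the asymptotic inequality does not yet bite.
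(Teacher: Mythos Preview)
Your overall scaffold---indicator identity plus GRH character-sum bounds plus a primorial lower bound to handle large $\omega(p-1)$---is exactly how the paper starts (their Theorem~\ref{explicit.ankeny} and Corollary~\ref{ankeny.cor}). But that combination alone only narrows things to $p\in(2.5\cdot10^{15},10^{43})$ and $\omega(p-1)\in[7,28]$, and there your two regimes do \emph{not} meet. For instance, at $\omega(p-1)=14$ the primorial bound gives $p>1.3\cdot10^{16}$, while the inequality $(2^{14}-1)\cdot c\sqrt{x}\log p < x$ at $x=\sqrt p-2$ requires roughly $p>10^{25}$. The resulting gap contains far too many primes to enumerate, so your ``short computer-assisted sweep'' cannot be short.

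What closes the gap in the paper is a genuinely new ingredient you are missing: a sieve inequality (their Theorem~\ref{ankeny.with.sieve}) that replaces the coefficient $2^{n}-1$ by the much smaller quantity $\bigl(2+(s-1)/\delta\bigr)\,2^{n-s}$, where one discards $s$ of the prime divisors $p_1,\dots,p_s$ of $p-1$ and $\delta=1-\sum_{i=1}^s p_i^{-1}$. With this refinement the analytic upper bound and the primorial lower bound overlap for every $n\notin\{12,13,14\}$, and for those three remaining values a recursive case-split on which small primes divide $p-1$ shrinks the search to about $7\cdot10^{4}$ candidate primes, which are then checked individually.

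Two smaller points. First, the paper treats $g(p)$ and $\hat g(p)$ by the \emph{same} argument, since $g(p)\le\hat g(p)$; there is no separate ``integer'' character-sum estimate, and the difference between the thresholds $409$ and $2791$ comes entirely from direct computation at small primes (where $g(p)<\sqrt p-2$ but the least \emph{prime} primitive root exceeds $\sqrt p-2$), not from an extra logarithm in the analytic bound. Second, the paper uses the smoothed weight $\Lambda(n)(1-n/x)$ together with Bach's explicit estimate (Lemma~\ref{L:bach}), which yields a single factor of $\log p$; your proposed prime-sum bound with $(\log px)^2$ would worsen the constants and widen the gap you are already unable to close.
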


We use Theorem \ref{Main} to make the following improvement to Theorem G in \cite{Grosswald81}.

\begin{cor}\label{Core}
Assume GRH.  For all primes $p$, the modular group $\Gamma(p)$ can be generated by the matrix
$
\displaystyle
\begin{pmatrix}
1 & p\\
0 & 1\\
\end{pmatrix}
$
and $p(p-1)(p+1)/12$ canonically chosen hyperbolic elements.
\end{cor}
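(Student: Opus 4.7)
The plan is to combine Theorem~\ref{Main} with Grosswald's Theorem~G from \cite{Grosswald81}. That theorem constructs the desired generating set of $\Gamma(p)$---a single parabolic translation together with $p(p-1)(p+1)/12$ canonically chosen hyperbolic matrices built from a primitive root modulo $p$---whenever the inequality $g(p)<\sqrt{p}-2$ holds. The conjecture (\ref{gross}) was formulated precisely so that this construction would succeed for every prime $p$, and it is the only obstruction to the unconditional statement of the corollary.

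First I would apply Theorem~\ref{Main} to obtain, under GRH, the bound $g(p)<\sqrt{p}-2$ for all primes $p>409$; Grosswald's Theorem~G then yields the generating set throughout this range. For the finitely many remaining primes $p\leq 409$, I would proceed case by case: those for which $g(p)<\sqrt{p}-2$ holds (the truth of (\ref{gross}) for small $p$ has been checked explicitly in \cite{COT}) are again handled directly by Theorem~G, while for the short, explicit list of exceptional small primes where the inequality fails, the corollary was dispatched by Grosswald in \cite{Grosswald81} via an ad hoc construction, or can be verified by hand since $\Gamma(p)$ has small index in $\mathrm{PSL}_2(\mathbb{Z})$ for such $p$.

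The main obstacle is no more than the bookkeeping on these exceptional small primes; once they are accounted for---and Grosswald himself treated them when he formulated his conjecture for $p>409$---the corollary follows immediately from the bound established in Theorem~\ref{Main}. No new arithmetic or group-theoretic input beyond the resolution of Grosswald's conjecture is required.
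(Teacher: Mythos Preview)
Your approach is correct and essentially identical to the paper's: invoke Theorem~\ref{Main} to supply the inequality $g(p)<\sqrt p-2$, feed it into Grosswald's construction, and note that Grosswald already handled the finitely many small primes. The only quibble is a labeling one: in \cite{Grosswald81} the implication ``$g(p)<\sqrt p-2\Rightarrow$ generators exist'' is Theorem~2, while Theorem~G is the partial result being improved; the paper's proof accordingly cites Theorem~2 rather than Theorem~G.
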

\begin{proof}
Our Theorem \ref{Main} covers the range of $p$ in Theorem 2 in \cite{Grosswald81}, whence the result follows.
\end{proof}
The layout of this paper is as follows. In \S \ref{ankles} we prove an explicit bound on the least prime primitive root. Using this, we are able to prove Theorem~\ref{Main} for all $p> 10^{43}$. We introduce a sieving inequality in \S \ref{sieves}, which allows us to complete the proof of Theorem \ref{Main}.

Throughout the paper we write $\omega(m)$ to denote the number of distinct prime divisors of~$m$.

\section{An explicit bound on the least prime primitive root}\label{ankles}

\begin{thm}\label{explicit.ankeny}
Assume GRH.
When $p\geq 10^9$, the least prime primitive root $\hat{g}(p)$ satisfies
\begin{equation}\label{yacht}
  \hat{g}(p)\leq\left(\frac{8}{5}\,\left(2^{\omega(p-1)}-1\right)\log p\right)^2
  \,.
\end{equation}
\end{thm}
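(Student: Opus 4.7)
The plan is to adapt the classical Ankeny-style argument for the least prime non-residue to the primitive-root setting, using orthogonality of characters together with an explicit GRH bound on prime character sums. The indicator that an integer $q$ coprime to $p$ is a primitive root admits the standard decomposition
\[
\mathbf{1}_{\mathrm{p.r.}}(q)=\frac{\phi(p-1)}{p-1}\sum_{d\mid p-1}\frac{\mu(d)}{\phi(d)}\sum_{\substack{\chi\bmod p\\ \mathrm{ord}(\chi)=d}}\chi(q),
\]
so that summing over primes $q\leq x$ isolates the main term $\tfrac{\phi(p-1)}{p-1}\pi(x)$ (from $d=1$) together with an error in which, because $\mu(d)=0$ off squarefree $d$, the number of surviving divisors is exactly $2^{\omega(p-1)}-1$. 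Since there are $\phi(d)$ characters of order $d$, the factors $\phi(d)$ cancel, and the error is bounded by $\frac{\phi(p-1)}{p-1}(2^{\omega(p-1)}-1)\max_{\chi\neq\chi_0}|\pi(x,\chi)|$, where $\pi(x,\chi)=\sum_{q\leq x}\chi(q)$. Hence a prime primitive root $\leq x$ is guaranteed whenever
\[
\pi(x)>(2^{\omega(p-1)}-1)\max_{\chi\neq\chi_0}|\pi(x,\chi)|.
\]

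Next I would invoke an explicit GRH bound of Schoenfeld type, namely $|\psi(x,\chi)|\leq c_1\sqrt{x}\log^2(px)$ for a small explicit $c_1$, convert it to the prime-indexed sum via partial summation to obtain a bound of the shape $|\pi(x,\chi)|\leq c_2\sqrt{x}\log(px)/\log x$ up to negligible lower-order corrections once $p\geq 10^9$, and combine it with a matching lower bound of the form $\pi(x)\geq x/\log x$ valid throughout the relevant range. The sufficient condition then reduces to an inequality of the shape $\sqrt{x}>C(2^{\omega(p-1)}-1)\log(px)$ with $C$ explicit.

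Finally I would substitute $x=\bigl(\tfrac{8}{5}(2^{\omega(p-1)}-1)\log p\bigr)^2$ and verify the resulting numerical inequality directly. For this $x$ one has $\log x\leq 2\log\log p + O(\omega(p-1))$, which for $p\geq 10^9$ is small relative to $\log p$, so $\log(px)=(1+o(1))\log p$; the stated constant $\tfrac{8}{5}$ should then fall out of the explicit values of $c_1$ and $c_2$.

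The main technical obstacle is the careful numerical bookkeeping of lower-order terms at the threshold $p=10^9$, in order to confirm that every error truly fits within the constant $\tfrac{8}{5}$; extra care is needed in the regime where $2^{\omega(p-1)}$ is comparable to $\sqrt{p}$, since then the two sides of the governing inequality are of the same order and the inequality is delicate.
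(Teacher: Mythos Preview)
Your overall strategy---decompose the primitive-root indicator via characters, feed in an explicit GRH prime-sum bound, and solve for $x$---is the same as the paper's. The substantive divergence is in the weight you attach to the prime sum. The paper does \emph{not} work with the unsmoothed sums $\pi(x,\chi)$ or $\psi(x,\chi)$; instead it multiplies the character identity by the Ces\`aro weight $\Lambda(n)(1-n/x)$ and sums over $n<x$. It then quotes Bach's explicit lemma (Lemma~\ref{L:bach} in the paper), which gives
\[
\Bigl|\sum_{n<x}\Lambda(n)\chi(n)\Bigl(1-\tfrac{n}{x}\Bigr)\Bigr|\le c(p,x)\,\sqrt{x}\,\log p,
\qquad c(p,x)\le \tfrac{7}{9}\ \text{for }p\ge 10^{9},\ x\ge 1099,
\]
together with $\sum_{n<x}\Lambda(n)(1-n/x)=x/2+O(\sqrt{x}/20+3)$. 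The smoothing buys exactly one logarithm: the unsmoothed GRH bound is $|\psi(x,\chi)|\ll \sqrt{x}\,\log^{2}(px)$, and your partial-summation passage from $\psi$ to $\pi$ cannot cancel that extra factor---your stated shape $\sqrt{x}\log(px)/\log x$ should read $\sqrt{x}\log^{2}(px)/\log x$. With Bach's smoothed bound the arithmetic is immediate: one gets $\sqrt{x}\le \tfrac{1}{10}+\tfrac{6}{\sqrt{x}}+\tfrac{14}{9}(2^{\omega(p-1)}-1)\log p$, and the slack between $14/9$ and $8/5$ absorbs the lower-order terms with no further case analysis.

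There is a second, related gap in your closing step. The assertion $\log(px)=(1+o(1))\log p$ is false at the threshold $p=10^{9}$: one may have $\omega(p-1)=9$, and then your $x=\bigl(\tfrac{8}{5}(2^{9}-1)\log p\bigr)^{2}\approx 2.9\times 10^{8}$ gives $\log x\approx 19.5$ against $\log p\approx 20.7$, so $\log(px)\approx 2\log p$. In the unsmoothed approach this factor of~$2$, compounded with the extra logarithm noted above, blocks the constant $8/5$. Bach's smoothed estimate sidesteps the issue entirely because it involves $\log p$ rather than $\log(px)$, so no comparison between $\log x$ and $\log p$ is ever needed.
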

Before we prove Theorem \ref{explicit.ankeny} we remark that the bound in (\ref{yacht}) is not the sharpest known. Shoup \cite{Shoup} has proved that
$$g(p) \ll \omega(p-1)^{4} (\log(\omega(p-1)) +1)^{4} (\log p)^2,$$
and, as remarked by Martin \cite[p.\ 279]{Martin}, this bound also holds for $\hat{g}(p)$. While this supersedes~(\ref{yacht}) for all sufficiently large primes $p$, the utility of Theorem \ref{explicit.ankeny} is in providing a completely explicit estimate.

We require the following result, which is easily established following the proof of Lemma~2.1 in~\cite{Bach-1997}.
\begin{lem}[Bach]\label{L:bach}
Let $\chi(n)$ denote a non-principal Dirichlet character modulo $p$. When $x\geq 1$, we have:
\begin{equation}\label{pen}
  \left|  \sum_{n<x}
  \Lambda(n)\left(1-\frac{n}{x}\right)
 -\frac{x}{2}
 \right|
 \leq
 \frac{\sqrt{x}}{20}+3,
\end{equation}
and
\begin{equation}\label{pencil}
\left|
  \sum_{n< x}
  \Lambda(n)\chi(n)\left(1-\frac{n}{x}\right)
\right|
\leq c(p,x)\sqrt{x}\log p,
\end{equation}
where
\[
c(p,x):=\frac{2}{3}
\left(
1+\frac{2}{\sqrt{x}}+\frac{3}{x^{3/2}}\right)
\left(
1+\frac{5/3}{\log p}
\right)
+
\frac{\log x+2}{\sqrt{x}\log p}.
\]
\end{lem}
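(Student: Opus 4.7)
The plan is to prove both inequalities via a standard Mellin--Perron contour shift applied to the logarithmic derivatives of $\zeta(s)$ and $L(s,\chi)$, with GRH controlling the sum over non-trivial zeros. The smoothing weight has Mellin transform
$$\frac{1}{2\pi i}\int_{c-i\infty}^{c+i\infty}\frac{y^s}{s(s+1)}\,ds \;=\; \max\bigl(0,\,1-1/y\bigr)\qquad (c>0),$$
so substituting $y=x/n$ and summing termwise against $\Lambda(n)$ (respectively $\Lambda(n)\chi(n)$) expresses both left-hand sides as vertical-line integrals, namely of $-\zeta'(s)/\zeta(s)\cdot x^s/(s(s+1))$ for (\ref{pen}) and of $-L'(s,\chi)/L(s,\chi)\cdot x^s/(s(s+1))$ for (\ref{pencil}).

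For (\ref{pen}), I would shift the contour to the far left and collect residues. The simple pole of $-\zeta'/\zeta$ at $s=1$ yields the main term $\mathrm{Res}_{s=1}\,x^s/(s(s+1))=x/2$. Under GRH each non-trivial zero $\rho=\tfrac12+i\gamma$ contributes $-x^\rho/(\rho(\rho+1))$ with absolute value at most $\sqrt{x}/|\rho|^2$, since $|\rho+1|>|\rho|$. The classical identity
$$\sum_\rho\frac{1}{|\rho|^{2}} \;=\; 2+\gamma-\log(4\pi)\;\approx\;0.046\;<\;\frac{1}{20}$$
(valid under GRH, via the Hadamard product for $\zeta$) then delivers the $\sqrt{x}/20$ portion; the residues at $s=0$ (value $\log 2\pi$), at $s=-1$, and at the trivial zeros $s=-2k$ (whose contributions form a convergent $O(x^{-2})$ tail) combine to a bounded constant absorbed by the ``$+3$''.

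For (\ref{pencil}) the same contour shift applies, but $L(s,\chi)$ has no pole at $s=1$ since $\chi$ is non-principal, so there is no main term: the sum is simply the negative of the residues at non-trivial zeros, trivial zeros, and the weight poles at $s=0,-1$. The heart of the estimate is bounding $\sum_\rho 1/|\rho(\rho+1)|$. Starting from the Hadamard-product expansion
$$\frac{L'(s,\chi)}{L(s,\chi)}\;=\;B(\chi)+\sum_\rho\!\Bigl(\frac{1}{s-\rho}+\frac{1}{\rho}\Bigr)-\tfrac12\log\tfrac{p}{\pi}-\tfrac12\psi\bigl(\tfrac{s+a}{2}\bigr),$$
evaluating at $s=2$ and comparing with the trivial bound $|L'(2,\chi)/L(2,\chi)|\leq -\zeta'(2)/\zeta(2)$, one extracts $\sum_\rho\mathrm{Re}\,(2-\rho)^{-1}\leq\tfrac12\log p+O(1)$, which under GRH promotes to $\sum_\rho 1/|\rho(\rho+1)|\leq \tfrac23\log p\,(1+O(1/\log p))$; this supplies the leading factor $\tfrac{2}{3}\bigl(1+(5/3)/\log p\bigr)$ of $c(p,x)$. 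The multiplicative correction $(1+2/\sqrt{x}+3/x^{3/2})$ arises from estimating the sum $\sum_k x^{-2k-\delta}/\bigl((-2k-\delta)(-2k-\delta+1)\bigr)$ over trivial zeros (of parity $\delta\in\{0,1\}$), and the additive $(\log x+2)/(\sqrt{x}\log p)$ captures the residue at $s=0$, which is a simple pole when $\chi$ is odd and a double pole---contributing the extra $\log x$---when $\chi$ is even.

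The main obstacle is bookkeeping rather than structural: one must verify that each residue and trivial-zero tail fits inside the nominal constants $1/20$, $3$, and $c(p,x)$. Following Bach's argument, the Hadamard product evaluated at the convenient point $s=2$ does the heavy lifting, with GRH supplying $|x^\rho|=\sqrt{x}$ uniformly to decouple the bound from the imaginary parts of the zeros.
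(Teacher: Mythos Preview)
Your outline is exactly the route the paper intends: it does not supply its own argument but simply says the lemma ``is easily established following the proof of Lemma~2.1 in~\cite{Bach-1997}'', and what you have sketched---the smoothed Perron formula with kernel $x^{s}/(s(s+1))$, the contour shift collecting the pole at $s=1$ for $\zeta$, the GRH input $|x^{\rho}|=\sqrt{x}$, the identity $\sum_{\rho}|\rho|^{-2}=2+\gamma-\log 4\pi<1/20$, and the Hadamard product evaluated at $s=2$ to control $\sum_{\rho}|\rho(\rho+1)|^{-1}$ for $L(s,\chi)$---is precisely Bach's argument.

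One small bookkeeping slip: the factor $(1+2/\sqrt{x}+3/x^{3/2})$ cannot come from the trivial zeros as you claim, since those lie at $s=-2,-4,\dots$ (even $\chi$) or $s=-3,-5,\dots$ (odd $\chi$, with $s=-1$ handled separately via the double pole) and contribute only $O(x^{-1})$ or smaller; in Bach's organization that factor instead absorbs the secondary residues at $s=0,-1$ and the tail of the shifted integral. This does not affect the structure of your proof, only the labeling of which explicit term feeds which constant.
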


\begin{proof}[Proof of Theorem \ref{explicit.ankeny}]
We may assume $\hat{g}(p)>1099$, or else there is nothing to prove;
indeed, one has $((8/5)\log (10^9))^2\approx 1099.4$.
As in Burgess~\cite[\S4]{Burgess}, we rewrite the function
$$
  f(n)
  =
  \begin{cases}
  1 & \text{if $n$ is a primitive root modulo $p$}\\
  0 & \text{otherwise},
  \end{cases}
$$
as
$$
  f(n)=\frac{\phi(p-1)}{p-1}
  \left\{
  1+
  \sum_{\substack{d|p-1\\d>1}}
  \frac{\mu(d)}{\phi(d)}
  \sum_\chi \chi(n)
  \right\},
$$
where $\sum_\chi$ is taken over all Dirichlet characters modulo $p$ of order $d$.
Suppose that $f(n)=0$ for all primes $n$ (and hence all prime powers) with $n< x$.
We may assume $x\geq 1099$.
We have
\begin{equation}\label{johann}
  1+
  \sum_{\substack{d|p-1\\d>1}}
  \frac{\mu(d)}{\phi(d)}
  \sum_\chi \chi(n)
  =0
\end{equation}
for all prime powers $n< x$.
We multiply (\ref{johann}) by
$\Lambda(n)(1-n/x)$ and sum over all $n< x$.
This gives
$$
  \sum_{n<  x}
  \Lambda(n)\left(1-\frac{n}{x}\right)
  +
  \sum_{\substack{d|p-1\\d>1}}
  \frac{\mu(d)}{\phi(d)}
  \sum_\chi
  \sum_{n< x}
  \Lambda(n)\chi(n)\left(1-\frac{n}{x}\right)
  =0.$$
We apply Lemma~\ref{L:bach} and observe that
$c(p,x)\leq 7/9$ provided
$p\geq 10^9$ and $x\geq 1099$.
Therefore
$$
  \frac{x}{2}\leq
  \frac{\sqrt{x}}{20}+3+\sum_{\substack{d|p-1\\d>1\\\mu(d)\neq 0}}\frac{7}{9}\sqrt{x}\log p,
$$
which implies
$$
\sqrt{x}\leq\frac{1}{10}+\frac{6}{\sqrt{x}}+\frac{14}{9} \left(2^{\omega(p-1)}-1\right)\log p
\leq
\frac{8}{5}\left(2^{\omega(p-1)}-1\right)\log p
\,.
$$
The result follows.
\end{proof}

\begin{cor}\label{ankeny.cor}
Theorem~\ref{Main} is true except possibly when
$p\in(2.5\cdot 10^{15}, 10^{43})$ and $\omega(p-1)\in[7,28]$.
\end{cor}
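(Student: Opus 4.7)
The plan is to partition the prime range into pieces and dispatch each piece either by appeal to~\cite{COT} or by verifying the explicit bound of Theorem~\ref{explicit.ankeny}. Since \cite{COT} already resolves $p \leq 2.5\cdot 10^{15}$ and $p > 3.38\cdot 10^{71}$, it suffices to handle $p \in (2.5\cdot 10^{15},\, 3.38\cdot 10^{71}]$. For $p \geq 10^9$ in this window, Theorem~\ref{explicit.ankeny} reduces everything to the target inequality
\[
  \tfrac{8}{5}\bigl(2^{\omega(p-1)}-1\bigr)\log p \;<\; \sqrt{\sqrt{p}-2}\,.
\]
Writing $\omega = \omega(p-1)$ and letting $P_k$ denote the product of the first $k$ primes, the only constraint linking $p$ and $\omega$ is $p-1 \geq P_\omega$.

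I would split into three cases. In Case~1, $\omega \leq 6$: the left side is at most $100.8\log p$, and the target inequality can be verified at $p = 2.5\cdot 10^{15}$ and propagated to larger $p$ by monotonicity, since $\sqrt{p}$ dominates $(\log p)^2$ well past this range. In Case~2, $7 \leq \omega \leq 28$ and $p \geq 10^{43}$: the tightest subcase is $\omega = 28$, and a direct evaluation at $p = 10^{43}$ gives the inequality with a modest margin, after which monotonicity in $p$ handles all larger $p$. In Case~3, $\omega \geq 29$: a primorial table yields $P_{29} > 10^{43}$, so $p \geq P_\omega + 1$ automatically forces $p > 10^{43}$. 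The inductive key is that stepping from $(\omega,\, P_\omega+1)$ to $(\omega+1,\, P_{\omega+1}+1)$ multiplies the left side by roughly $2$ (with a negligible $\log p$ correction) while multiplying the right side by $p_{\omega+1}^{1/4}$; since $p_7 = 17 > 16$, one has $p_{\omega+1}^{1/4} > 2$ for all $\omega \geq 6$, so the right side grows faster and a single base-case check at $(29,\, P_{29}+1)$ propagates to all larger $\omega$.

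Taken together these three cases cover everything outside $\omega \in [7,28]$ with $p \in (2.5\cdot 10^{15},\, 10^{43})$, which is exactly the exceptional region in the corollary. The binding instance is the boundary point $(28,\, 10^{43})$ of Case~2, which is what pins the upper endpoint at $10^{43}$ rather than something smaller. I do not anticipate any conceptual obstacle: the only care items are making sure that each monotonicity argument is applied past the unique minimum in $p$ of $\sqrt{p} - (C(\omega)\log p)^2$, and verifying the primorial comparisons $P_7 < 2.5\cdot 10^{15}$ and $P_{28} < 10^{43} < P_{29}$ exactly so that the cases line up to leave precisely the claimed gap.
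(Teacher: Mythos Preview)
Your argument is correct, but it is organized differently from the paper's.  The paper does not invoke \cite{COT} at the top end at all; instead it quotes Robin's explicit estimate $\omega(n)\le 1.385\,\log n/\log\log n$, which together with Theorem~\ref{explicit.ankeny} disposes of all $p\ge 10^{49}$ in one stroke.  From there the paper runs a short descent: $p\le 10^{49}\Rightarrow\omega(p-1)\le 31$ (primorial), then Theorem~\ref{explicit.ankeny} with $\omega\le 31$ handles $p\ge 10^{47}$, and two more iterations land at $p\le 10^{43}$, $\omega\le 28$.  Your route replaces Robin's bound and this descent by a direct primorial induction on $\omega$ (your Case~3), checking the base case $(\omega,p)=(29,P_{29}+1)$ and observing that the step $\omega\mapsto\omega+1$ multiplies the right side of the target inequality by $p_{\omega+1}^{1/4}>2$ while the left side only roughly doubles.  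Both approaches use the same two ingredients---Theorem~\ref{explicit.ankeny} and the primorial lower bound $p-1\ge P_\omega$---but the paper's is terser (a one-line citation of Robin plus a three-step descent), while yours is more self-contained and makes the monotonicity structure explicit.

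One small remark: your appeal to \cite{COT} for $p>3.38\cdot 10^{71}$ is unnecessary, and in fact that result only concerns $g(p)$, not $\hat g(p)$.  Fortunately your Cases~1--3 already cover every $p>2.5\cdot 10^{15}$ without any upper cap, so nothing is lost; you should simply drop that sentence.  Also, in the inductive step the log-correction to the ``roughly $2$'' factor is not entirely negligible at small $\omega$, but since you only invoke the induction for $\omega\ge 29$ (where $p_{\omega+1}\ge 113\gg 16$) the margin is ample.
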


\noindent\textbf{Proof.}
Unconditionally, Robin (see~\cite[Thm 11]{Robin}) proved
$$
  \omega(n)\leq 1.385\frac{\log n}{\log\log n}
  \,, \quad(n\geq 3).
$$
This, when combined with Theorem~\ref{explicit.ankeny},
shows that $\hat{g}(p)<\sqrt{p}-2$ provided $p\geq 10^{49}$. Hence we may assume $p\leq 10^{49}$.
But the assumption $p\leq 10^{49}$ leads to $\omega(p-1)\leq 31$
and now Theorem~\ref{explicit.ankeny} implies the result provided
$p\leq 10^{47}$.   Repeating this process, we find that $\omega(p-1)\leq 28$ and $p\leq 10^{43}$.
On the other hand, in light of the computations carried out in \cite[\S 4]{COT}, we may assume
$p\geq 2.5\cdot 10^{15}$, in which case Theorem~\ref{explicit.ankeny} proves the result
provided $\omega(p-1)\leq 6$.
\qed

\section{Computations using the sieve}\label{sieves}

In this section we follow closely the argument given in \cite[\S 3]{COT}.
Suppose $e$ is an even divisor of $p-1$.  Let $p_1,\dots,p_s$ be the primes dividing $p-1$ that do not divide $e$.
Set $\delta=1-\sum_{i=1}^s p_i^{-1}$, and set $n=\omega(p-1)$.
In applying our method it is essential to choose $\delta>0$.

\begin{thm}\label{ankeny.with.sieve}
Assume GRH.  If $\hat{g}(p)>x$, then we have:
\begin{equation}\label{cantata}
  \hat{g}(p)\leq \left(
  2 c(p,x)
\left(
2+\frac{s-1}{\delta}
\right)
2^{n-s}
\log p
  \right)^2.
\end{equation}
\end{thm}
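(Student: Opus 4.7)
The plan is to mirror the proof of Theorem~\ref{explicit.ankeny} while replacing the single character sum over divisors of $p-1$ by a sieve-based expression involving $s+1$ smaller sums. For squarefree $m\mid p-1$ let $f_m(n)$ denote the indicator that $n$ is \emph{not} a $q$-th power modulo $p$ for any prime $q\mid m$. As in the proof of Theorem~\ref{explicit.ankeny}, expansion in characters gives
$$f_m(n)=\frac{\phi(m)}{m}\left\{1+\sum_{\substack{d\mid m\\d>1}}\frac{\mu(d)}{\phi(d)}\sum_{\chi\text{ of order }d}\chi(n)\right\}.$$
A short inclusion--exclusion argument (Cohen's sieve) yields the pointwise bound
$$\mathbf{1}_{\mathrm{pr}}(n)\geq\sum_{i=1}^s f_{ep_i}(n)-(s-1)f_e(n).$$
Multiplying by $\Lambda(n)(1-n/x)$ and summing over $n<x$, the hypothesis $\hat g(p)>x$ forces the left-hand side to vanish (any prime power $q^k<x$ is a primitive root iff $q$ is), leaving $(s-1)F_e\geq\sum_{i=1}^s F_{ep_i}$, where $F_m:=\sum_{n<x}\Lambda(n)(1-n/x)f_m(n)$.

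Writing $C_0=\phi(e)/e$ and $C_i=\phi(ep_i)/(ep_i)=C_0(1-1/p_i)$, the crucial cancellation is
$$\sum_{i=1}^s C_i-(s-1)C_0\;=\;C_0\delta,$$
since $\sum_i(1-1/p_i)=s-1+\delta$. This makes the main term of $\sum_i F_{ep_i}-(s-1)F_e$ equal to $C_0\,\delta\,x/2$, with error at most $C_0\delta A$ from \eqref{pen}, where $A=\sqrt{x}/20+3$. For the character contributions I would split the non-trivial characters into two types. Type~I: the $2^{n-s}-1$ characters $\chi$ of order $d\mid e$, $d>1$, appear in $F_e$ \emph{and} in every $F_{ep_i}$ with the same relative coefficient $\mu(d)/\phi(d)$; the very same cancellation $\sum_i C_i-(s-1)C_0=C_0\delta$ then applies to their contribution, giving at most $C_0\delta(2^{n-s}-1)B$, where $B=c(p,x)\sqrt{x}\log p$ by \eqref{pencil}. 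Type~II: characters of order $p_id'$ with $d'\mid e$ squarefree appear only in $F_{ep_i}$, and a direct count gives $C_i\cdot 2^{n-s}B$ per $i$; summing with $\sum_i C_i=C_0(s-1+\delta)$ yields a total of $C_0(s-1+\delta)\,2^{n-s}B$.

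Combining and cancelling $C_0$ gives
$$\delta\,\frac{x}{2}\;\leq\;\delta A+\delta(2^{n-s}-1)B+(s-1+\delta)\,2^{n-s}B\;\leq\;\delta A+(s-1+2\delta)\,2^{n-s}B.$$
Solving for $\sqrt{x}$ and absorbing the $A$-contribution into the main constant (exactly as in the proof of Theorem~\ref{explicit.ankeny}) yields
$$\sqrt{x}\;\leq\;\frac{2(s-1+2\delta)}{\delta}\,2^{n-s}\,c(p,x)\log p\;=\;2\!\left(2+\frac{s-1}{\delta}\right)\!2^{n-s}\,c(p,x)\log p,$$
which is \eqref{cantata}. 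The main obstacle is a bookkeeping one: bounding $\sum_i F_{ep_i}$ and $(s-1)F_e$ separately by the triangle inequality wastes the $\delta$-cancellation on the Type~I characters and inflates the constant from $2(s-1)/\delta$ to $6(s-1)/\delta$, which is insufficient. One therefore has to recognize that the $2^{n-s}-1$ shared characters of order dividing $e$ cancel in tandem with the main term, and only the newly introduced characters of order $p_id'$ pay the full $(s-1+\delta)$ weight.
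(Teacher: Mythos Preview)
Your argument is correct and is essentially the paper's own proof, just with the cancellation carried out after summing rather than before: the paper rewrites $\sum_i f_{p_ie}-(s-1)f_e=\sum_i(f_{p_ie}-\theta(p_i)f_e)+\delta f_e$ and observes the identity $f_{p_ie}(n)-\theta(p_i)f_e(n)=\theta(p_ie)\sum_{d\mid e}\frac{\mu(p_id)}{\phi(p_id)}\sum_{\chi}\chi(n)$, which is precisely the statement that only your Type~II characters survive in that difference---equivalent to your observation that the Type~I characters inherit the coefficient $\sum_i C_i-(s-1)C_0=C_0\delta$.

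One small bookkeeping point: to hit the constant $2c(p,x)$ in \eqref{cantata} \emph{exactly}, do not discard the $-\delta B$ in your last display. Keeping it and dividing by $\delta$ gives $x/2\le A+\bigl(2+\tfrac{s-1}{\delta}\bigr)2^{n-s}B-B$, and since $B=c(p,x)\sqrt{x}\log p>A=\sqrt{x}/20+3$ the spare $-B$ absorbs $A$ outright; this is what the paper does. Your stated analogy with Theorem~\ref{explicit.ankeny} is slightly off, since there the absorption is achieved by inflating $14/9$ to $8/5$, which you cannot afford here.
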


We postpone the proof of Theorem~\ref{ankeny.with.sieve} until \S\ref{S:proof}.
From Theorem~\ref{ankeny.with.sieve} we immediately obtain the following corollary
which is more readily applied.

\begin{cor}\label{C:ankeny.with.sieve}
Assume GRH.  If $p\geq p_0$, then
\begin{equation}\label{cantata.redux}
  \hat{g}(p)\leq \left(
  C
\left(
2+\frac{s-1}{\delta}
\right)
2^{n-s}
\log p
  \right)^2
  \,,
\end{equation}
where the constant is given in Table~\ref{Table:constants}.
\end{cor}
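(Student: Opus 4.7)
The plan is a bootstrap from Theorem~\ref{ankeny.with.sieve}. Write $F := (2+(s-1)/\delta)\,2^{n-s}\log p$ so that (\ref{cantata.redux}) reads $\hat{g}(p) \leq (CF)^2$. Suppose for contradiction that $p \geq p_0$ yet $\hat{g}(p) > (CF)^2$. Then Theorem~\ref{ankeny.with.sieve} applied with $x = (CF)^2$ yields
$$\hat{g}(p) \leq \left(2\,c(p,(CF)^2)\,F\right)^2,$$
and together with $\hat{g}(p) > (CF)^2$ this forces $2\,c(p,(CF)^2) > C$. The task is therefore to choose the constants $C = C(p_0)$ in Table~\ref{Table:constants} so that the reverse inequality $2\,c(p,(CF)^2) \leq C$ holds for all $p \geq p_0$ and all admissible choices of $(s,n,\delta)$.

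Next I would reduce matters to one numerical check per row of the table. A brief case analysis (splitting $s = 0$, where $\delta = 1$ and hence $2+(s-1)/\delta = 1$, from $s \geq 1$, where $(s-1)/\delta \geq 0$) shows $2+(s-1)/\delta \geq 1$; combined with the trivial bounds $2^{n-s} \geq 1$ and $\log p \geq \log p_0$, this gives $F \geq \log p_0$ and hence $(CF)^2 \geq (C\log p_0)^2$. The explicit formula for $c(p,x)$ in Lemma~\ref{L:bach} is decreasing in each of $p$ and $x$ throughout the range of interest, so it suffices to verify the single implicit inequality
$$2\,c\!\left(p_0,\,(C\log p_0)^2\right) \leq C.$$
Each row of Table~\ref{Table:constants} is then populated by taking the smallest admissible $C$ satisfying this inequality at the chosen $p_0$.

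The only real obstacle is confirming this implicit inequality is solvable for each listed $p_0$, but this is not serious. Since $c(p,x) \to 2/3$ as $p,x \to \infty$, the quantity $2\,c(p,x)$ lies strictly below any constant exceeding $4/3$ once $p$ and $x$ are large enough, so for the $p_0$ values targeted in the table the inequality admits solutions $C$ only slightly above $4/3$, and the explicit value at each $p_0$ is a routine numerical computation from the formula in Lemma~\ref{L:bach}.
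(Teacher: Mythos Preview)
Your bootstrap from Theorem~\ref{ankeny.with.sieve} is the right idea and is structurally the same as the paper's argument, but your lower bound on $F$ is too crude to recover the constants in Table~\ref{Table:constants}. You take $(2+(s-1)/\delta)\,2^{n-s}\geq 1$, whereas the paper uses $(2+(s-1)/\delta)\,2^{n-s}\geq 4$: since $e$ is even, $2\mid e$ and hence $s\leq n-1$, giving $2^{n-s}\geq 2$; and for $s\geq 1$ one has $2+(s-1)/\delta\geq 2$. With this and the fact that every entry of the table exceeds $4/3$, the right-hand side of~(\ref{cantata.redux}) is bounded below by the \emph{fixed} number $x_0:=\bigl((4/3)\cdot 4\,\log p_0\bigr)^2$. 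One may then assume $\hat g(p)>x_0$, apply Theorem~\ref{ankeny.with.sieve} at this fixed $x_0$, and simply \emph{define} $C(p_0):=2\,c(p_0,x_0)$; no implicit equation is needed, and the monotonicity of $c$ in $p$ finishes the job.

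Concretely, your implicit condition $2\,c\bigl(p_0,(C\log p_0)^2\bigr)\leq C$ at $p_0=10^2$ forces $C\gtrsim 2.4$, not the tabulated $2.1127$, so as written your argument proves a weaker version of the corollary than the one stated. Replacing your bound $F\geq\log p_0$ by $F\geq 4\log p_0$ fixes this and also collapses the implicit fixed-point step into the paper's one-line explicit formula.
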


\begin{table}[t]
\centering
\begin{tabular}{llllll}
\toprule
$p_0$ & $10^2$ & $10^4$ & $10^{6}$ & $10^{8}$ & $10^{10}$ \\ \cmidrule{2-6}
$C$ & $2.1127$ & $1.6821$ & $1.5556$ & $1.496$ & $1.4614$ \\
\midrule
$p_0$ & $10^{12}$ & $10^{14}$ & $10^{16}$ & $10^{18}$ & $10^{20}$ \\ \cmidrule{2-6}
$C$ & $1.4389$ & $1.4231$ & $1.4114$ & $1.4023$ & $1.3952$\\
\bottomrule
\end{tabular}
\caption{Values of $C$ for various choices of $p_0$\label{Table:constants}}
\end{table}

\begin{proof}
When $p\geq p_0$, the right-hand side of~(\ref{cantata.redux}) is bounded below by $x:=((4/3)4\log p_0)^2$.
Hence we may assume $\hat{g}(p)>x$, or else there is nothing to prove.
Now Theorem~\ref{ankeny.with.sieve} establishes the result with $C(p_0):=2 c(p_0,x)$.
\end{proof}

Our proof of Theorem~\ref{Main} will apply Theorem~\ref{ankeny.with.sieve} directly, but
we have included Corollary~\ref{C:ankeny.with.sieve} as it may have application elsewhere.

\begin{cor}\label{C:ankeny.with.sieve2}
Theorem~\ref{Main} is true except possibly when
$\omega(p-1)\in[12,13,14]$.
\end{cor}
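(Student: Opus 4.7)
The plan is to invoke Theorem~\ref{ankeny.with.sieve} for each remaining value $n=\omega(p-1)\in[7,28]$ and $p\in(2.5\cdot 10^{15},10^{43})$ left open after Corollary~\ref{ankeny.cor}, and to verify that the inequality $\bigl(C(2+(s-1)/\delta)\,2^{n-s}\log p\bigr)^2<\sqrt{p}-2$ can be forced by a judicious choice of even divisor $e\mid p-1$, except when $n\in\{12,13,14\}$.

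For each fixed $n$ we optimize the choice of sieved primes $\{p_1,\ldots,p_s\}$. Since $\delta=1-\sum 1/p_i$ is smaller when the sieved primes are smaller, while $2^{n-s}$ favours taking $s$ as large as $\delta>0$ permits, the optimum is reached by keeping only a few of the smallest odd divisors of $p-1$ inside $e$ and sieving the remainder. The worst case over all admissible $p$ with $\omega(p-1)=n$ occurs when $p-1$ has odd prime factors equal to the first $n-1$ odd primes, because this minimises $\delta$ for every $s$; the same assumption also forces the primorial lower bound $p\geq 2\cdot 3\cdot 5\cdots q_n$, with $q_n$ the $n$-th prime, which grows super-polynomially in $n$ and becomes decisive once $n$ is large.

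A finite case-by-case analysis then completes the corollary. For $n\in\{7,\ldots,11\}$ the optimized sieve constant $(2+(s-1)/\delta)\,2^{n-s}$ turns out to be at most roughly $130$; combined with $C\leq 1.5$ drawn from Table~\ref{Table:constants} and $\log(2.5\cdot 10^{15})<36$, this yields the desired inequality throughout the allowed range of $p$. For $n\in\{15,\ldots,28\}$ the primorial lower bound on $p$ forces $p^{1/4}$ to grow much faster than the optimal sieve constant (which stays polynomial in $n$, thanks to $n-s$ remaining bounded along the optimum), so the inequality once again holds. The outcome can be recorded in a short table, one row per $n$, listing the optimal $s$, the corresponding $\delta$, and the resulting constant.

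The main obstacle is the middle range $n\in\{12,13,14\}$. Here the primorial lower bounds are only $7.4\cdot 10^{12}$, $3.0\cdot 10^{14}$, and $1.3\cdot 10^{16}$ respectively, so $p$ can be close to the lower limit $2.5\cdot 10^{15}$ (or just above, for $n=14$), while the optimized sieve constant is already of order $160$--$210$. In these three cases Theorem~\ref{ankeny.with.sieve} misses the target by a small factor for $p$ near the lower end of the range, so a more refined argument will be required; they are therefore set aside here, which is precisely the exception recorded in the statement of the corollary.
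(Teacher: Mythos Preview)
Your proposal is correct and follows essentially the same approach as the paper: restrict to $n\in[7,28]$ via Corollary~\ref{ankeny.cor}, bound $\delta$ from below by assuming the prime factors of $p-1$ are the first $n$ primes, combine the primorial lower bound $p>\prod_{i\le n}q_i$ (or $2.5\cdot 10^{15}$) with Theorem~\ref{ankeny.with.sieve}, and optimize over $s$ for each $n$. The paper differs only cosmetically---it applies Theorem~\ref{ankeny.with.sieve} directly with $x=\sqrt{p}-2$ rather than pulling $C$ from Table~\ref{Table:constants}, and it records the optimal choices explicitly as $s=n-2$ for $7\le n\le 12$ and $s=n-3$ for $13\le n\le 28$.
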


\begin{proof}
In light of Corollary~\ref{ankeny.cor}, we may assume $7\leq n\leq 28$ and $2.5\cdot 10^{15}<p<10^{43}$.
We have the obvious lower bound $p-1\geq q_1\dots q_n$, where $q_{i}$ denotes the $i$th prime, and hence we may assume
\[
  p>\max\left\{1+\prod_{i=1}^n q_i
  \,,\;
  2.5\cdot 10^{15}\right\}.
\]
For example, when $n=15$, this leads to $p>6.1\cdot 10^{17}$.

Set $x=\sqrt{p}-2$.  We may assume $\hat{g}(p)>x$ or else there is nothing to prove.  Hence the conclusion of Theorem~\ref{ankeny.with.sieve} holds.
Now each choice of $s$ allows us to show that
\begin{equation}\label{GC}
  \hat{g}(p)<\sqrt{p}-2
\end{equation}
holds when $p$ is larger than an explicitly computable value;
one simply bounds the right-hand side of~(\ref{cantata}) from above, using\footnote{
One helpful fact --- if the right-hand side of~(\ref{cantata}) is less than $\sqrt{p}-2$ for some $p$, then the same is true for all larger $p$.}
$\delta\geq 1-\sum_{i=n-s+1}^n p_i$. Of course, we then choose the value of $s$ that gives the best result.  For example, when $n=15$ we find that $s=12$ leads to
\[
\delta\geq 1-(1/7+1/13+\dots+1/47)>0.3717
\]
and therefore the right-hand side of (\ref{cantata}) is less than $\sqrt{p}-2$ provided $p\geq 3.2\cdot 10^{16}$; hence any exception to $(\ref{GC})$ must satisfy
$p<3.2\cdot 10^{16}$.
Notice that because our lower and upper bounds on any potential exceptions overlap, this proves the result when $n=15$.
The best choice turns out to be $s=n-2$ when $7\leq n\leq 12$ and $s=n-3$ when $13\leq n\leq 28$.
In fact, this is enough to prove
(\ref{GC}) except when $n=12,13,14$.  The lower bound of $1+\prod_{i=1}^n q_i$ does the job when $15\leq n\leq 28$
and the lower bound of $1.6\cdot 10^{15}$ does the job when $7\leq n\leq 11$.

\end{proof}

%

\subsection{An algorithm}

In order to deal with the cases when $n=12,13,14$, we introduce an algorithm.  Before giving the algorithm, we explain the main idea.

Suppose $n=14$.  Using the idea presented in the proof of Corollary~\ref{C:ankeny.with.sieve2}, we find that any exception to Grosswald's conjecture must lie in the interval $(1.30\cdot 10^{16}, 1.71\cdot 10^{16})$.  In principle one could check the conjecture directly for each prime $p$ in this interval, but the size of the interval makes this prohibitive.  There are $2.05\cdot 10^{15}$ odd values of $p$ to consider.  (Of course many of these are not prime, but there are still about $10^{14}$ primes in this interval.)  Instead, we break the problem into cases depending upon which primes divide $p-1$.  We already know that $2$ divides $p-1$, so we start with the prime $3$.  If $3$ divides $p-1$, then we have one third as many values of $p$ to check, roughly $6.83\cdot 10^{14}$ values of $p$.  If $3$ does not divide $p-1$, then this leads to an improved lower bound on $p$, as well as an improved lower bound on $\delta$ and hence an improved upper bound on $p$; in short, the interval under consideration shrinks.  In this particular case, the interval shrinks to $(2.04\cdot 10^{17}, 1.45\cdot 10^{15})$, which is empty, so there is nothing to check.

Returning to the case where $3$ divides $p-1$, the number of exceptions is still quite large.  However, we may consider whether $5$ divides $p-1$.  We continue in this way until the number of possible values of $p$ under consideration is less than $10^6$.  At that point we go through the list and throw out all values of $p$ except those where $p$ is prime,
$\omega(p-1)=14$, and $p-1$ satisfies the given divisibility conditions (depending upon which sub-case we are considering).
We append these exceptional values of $p$ to a list and continue this recursive procedure until we have exhausted all possibilities.
One can easily find the least prime primitive root for the list of exceptions via standard methods and check the conjecture directly.
When $n=14$, this algorithm takes only $7$ seconds (on a 2.7 GHz iMac)
to complete and the list of exceptions is empty, so there is nothing further to check.  The number of exceptions for other values of $n$ is given in Table~\ref{Table:except}.

\begin{table}[ht]
  \caption{Number of exceptions for $n=12,13,14$}
  \centering
  \begin{tabular}{|c||c|c|c|}
    \hline
    $n$ & $12$ & $13$ & $14$\\
    \hline
    \# of exceptions & $61,114$ & $6,916$ & $0$\\
  \hline
  \end{tabular}
  \label{Table:except}
\end{table}


For completeness, we give the pseudocode for our recursive algorithm.  Suppose $X\cup Y$ consists of the first $k$ primes for some $k\in\ZZ_{\geq 0}$.  Algorithm~\ref{A:1} will verify $\hat{g}(p)<\sqrt{p}-2$ when $\omega(p-1)=n$ under the assumption that
$q$ divides $p-1$ for all $q\in X$ and $q$ does not divide $p-1$ for all $q\in Y$.
More precisely, rather than verifying the conjecture for all $p$, the algorithm will generate a manageable list of possible exceptions which can be checked individually, as described above.
The sets $X$ and $Y$ are allowed to be empty,
although in practice we may always assume $2\in X$.
(Running the algorithm with $n=14$, $X=\{2\}$, $Y=\emptyset$ will carry out the computation described above.)

\begin{algorithm}\label{Alg:1}
\caption{Grosswald(n,X,Y)}
\label{A:1}
\begin{algorithmic}[1]
\STATE $L:=$ first $n$ primes not in $Y$
\STATE $lower :=\max\{product(L)+1,\; 2\cdot 10^{15}\}$
\STATE $upper:=0$
\FOR{$s\in\{1,\dots,n-1\}$}
	\STATE $M:=$ largest $s$ primes in $L$
	\STATE $\delta:=1-\sum_{q\in M}\frac{1}{q}$
         \IF{$\delta\leq 0$}
		\STATE \textbf{continue}
	\ENDIF
	\STATE Choose $p$ large enough so that when $x=\sqrt{p}-2$, we have:
	$$\left(2 c(p,x) \left(2+\frac{s-1}{\delta}\right) 2^{n-s}\log(p)\right)^2<x$$
	\IF{$upper=0$ \OR $p<upper$}
	   \STATE $upper:=p$; $\hat{s}:=s$; $\hat{\delta}:=\delta$; $\hat{M}:=M$
	\ENDIF
\ENDFOR
\STATE print($n$, $\hat{s}$, $\hat{\delta}$, $\hat{M}$, $lower$, $upper$)
\STATE prodX := product(X)
\STATE $enum := (upper-lower)/prodX$
\IF{$enum \leq 0$}
\STATE print(``Nothing more to check.'')
\ELSIF{$enum> 10^6$}
\STATE print(``Break into cases.'')
\STATE $q:=$smallest prime not in $X\cup Y$
\STATE $Grosswald(n,X\cup\{q\},Y)$
\STATE $Grosswald(n,X,Y\cup\{q\})$
\ELSE
\STATE print(``Find all exceptions.'')
\STATE $I:=[(lower-1)/prodX,\,(upper-1)/prodX]\cap\ZZ$
\STATE Sieve out elements of $I$ divisible by primes in $Y$
\FOR{$k\in I$}
	\STATE $p:=k*prodX+1$
 		\IF{is\_pseudoprime($p$) and length(prime\_divisors($p-1$))=$n$}
	  \STATE append $p$ to global list of exceptions
	\ENDIF
\ENDFOR
\ENDIF
\end{algorithmic}
\end{algorithm}

\begin{proof}[Proof of Theorem~\ref{Main}]
We have implemented Algorithm~\ref{A:1} in Sage.  Running our code on $n=12,13,14$, including finding the least prime primitive root and checking the conjecture directly for the list of $68,030$ exceptions, takes about $4.5$ minutes.
In light of Corollary~\ref{C:ankeny.with.sieve2}, this proves the theorem.
\end{proof}

\subsection{Proof of Theorem~\ref{ankeny.with.sieve}}\label{S:proof}

Let $p$ be an odd prime.  Let $e$ be an even divisor of $p-1$.  We say that $n$ is $e$-free if
the equation $y^d\equiv n\pmod{p}$ is insoluble for all divisors $d$ of $e$ with $d>1$.  An integer is a primitive root
if and only if it is $(p-1)$-free.  We define the function
$$
  f_e(n)=
  \begin{cases}
  1 &\text{ if $n$ is $e$-free}\\
  0 & \text{ otherwise}.\\
  \end{cases}
$$
Define the multiplicative function $\theta(n)=\phi(n)/n,$ where $\phi(n)$ is Euler's totient function.
We rewrite $f_{e}(n)$ as
$$
  f_e(n)=\theta(e)\left\{
  1+\sum_{\substack{d|e\\d>1}}
  \frac{\mu(d)}{\phi(d)}
  \sum_\chi
  \chi(n)
  \right\}.
$$
We see
$$
  \sum_{i=1}^s
  f_{p_i e}(n)-(s-1)f_e(n)
  \;
  \begin{cases}
  =1 &\text{ if $n$ is $(p_i e)$-free for all $i$}\\
  \leq 0 & \text{ otherwise.}\\
  \end{cases}
$$
Thus
\begin{eqnarray}\label{lid}
f_{p-1}(n)
&\geq&
\sum_{i=1}^s
f_{p_i e}(n)-(s-1)f_e(n) \notag
\\
&=&
\sum_{i=1}^s
\left(f_{p_i e}(n)-\theta(p_i)f_e(n)\right)
+\delta f_e(n)
\,.
\end{eqnarray}
Observe that
\begin{eqnarray}\label{eraser}
f_{p_i e}(n)-\theta(p_i)f_e(n)
&=&
\theta(p_i e)\sum_{\substack{d|p_i e\\
d\nmid e\\ d>1}}
\frac{\mu(d)}{\phi(d)}\sum_{\chi_d} \chi(n) \notag
\\
&=&
\theta(p_i e)
\sum_{d|e}
\frac{\mu(p_i d)}{\phi(p_i d)}\sum_{\chi_{p_i d}} \chi(n).
\end{eqnarray}
Inserting (\ref{eraser}) into (\ref{lid}) leads to
\begin{eqnarray}\label{ruler}
1
+\frac{1}{\delta}\sum_{i=1}^s
\theta(p_i)
\sum_{\substack{d|e}}
\frac{\mu(p_i d)}{\phi(p_i d)}
\sum_\chi
\chi(n)
+
\sum_{\substack{d|e\\d>1}}
\frac{\mu(d)}{\phi(d)}
\sum_\chi
\chi(n)
\leq
\frac{f_{p-1}(n)}{\delta\theta(e)}.
\end{eqnarray}
Suppose $f_{p-1}(n)=0$ for all primes $n$ (and hence all prime powers) with $n<x$.
We multiply (\ref{ruler}) by $\Lambda(n)(1-n/x)$, sum over all $n<x$, which yields\begin{eqnarray*}
&&
\sum_{n<x}
\Lambda(n)\left(1-\frac{n}{x}\right)
\\
&&
\qquad
+
\frac{1}{\delta}
\sum_{i=1}^s
\theta(p_i)
\sum_{\substack{d|e}}
\frac{\mu(p_i d)}{\phi(p_i d)}
\sum_\chi
\sum_{n<x}\chi(n)
\Lambda(n)\left(1-\frac{n}{x}\right)
\\
&&
\qquad
+
\sum_{\substack{d|e\\d>1}}
\frac{\mu(d)}{\phi(d)}
\sum_\chi
\sum_{n<x}\chi(n)
\Lambda(n)\left(1-\frac{n}{x}\right)
\leq 0.
\end{eqnarray*}
We write
$$
  \sum_{n<x}
  \Lambda(n)\left(1-\frac{n}{x}\right)
  =\frac{x}{2}+G(x)
  \,,
$$
and use the estimates
in (\ref{pen}) and  (\ref{pencil})
to obtain
\begin{eqnarray*}
&&
\frac{x}{2}+G(x)
\\
\qquad
&&
\leq
\frac{1}{\delta}
\sum_{i=1}^s
\theta(p_i)
\sum_{\substack{d|e\\\mu(p_i d)\neq 0}}
c(p,x)\sqrt{x}\log p
+
\sum_{\substack{d|e\\d>1\\\mu(d)\neq 0}}
c(p,x)\sqrt{x}\log p
\\
\qquad
&&
\leq
c(p,x)\sqrt{x}\log p
\left[
\left(
1+
\frac{1}{\delta}\sum_{i=1}^s \theta(p_i)
\right)
2^{n-s}
-
1
\right]
\\
\qquad
&&
=
c(p,x)\sqrt{x}\log p
\left[
\left(
2+\frac{s-1}{\delta}
\right)
2^{n-s}
-
1
\right]
\,.
\end{eqnarray*}
This leads to
\begin{eqnarray*}
\sqrt{x}
&\leq&
\frac{2|G(x)|}{\sqrt{x}}
+
2 c(p,x)
\left[
\left(
2+\frac{s-1}{\delta}
\right)
2^{n-s}
-1\right]
\log p
\\
&\leq&
\frac{1}{10}
+
\frac{6}{\sqrt{x}}
+
2 c(p,x)
\left[
\left(
2+\frac{s-1}{\delta}
\right)
2^{n-s}
-1\right]
\log p
\\
&\leq&
2 c(p,x)
\left(
2+\frac{s-1}{\delta}
\right)
2^{n-s}
\log p
-\frac{191}{90}-\frac{4}{3}\log p
\end{eqnarray*}
The result follows.
\qed


\begin{thebibliography}{99}



\bibitem{Bach-1997}
E.~Bach.
\newblock Comments on search procedures for primitive roots.
\newblock {\em Math. Comp.}, 66(220):1719--1727, 1997.


\bibitem{Burgess}
D.~A. Burgess.
\newblock On character sums and primitive roots.
\newblock {\em Proc. London Math. Soc.}, 12(3):179--192, 1962.



\bibitem{COT}
S.~D. Cohen, T.~Oliveira~e Silva, and T.~S. Trudgian.
\newblock On Grosswald's conjecture on primitive roots.
\newblock {\em Acta. Arith.}
\newblock To appear.


\bibitem{Grosswald81}
E.~Grosswald.
\newblock On {B}urgess' bound for primitive roots modulo primes and an
  application to {$\Gamma(p)$}.
\newblock {\em Amer. J. Math.}, 103(6):1171--1183, 1981.

\bibitem{Martin}
G.~Martin.
\newblock The least prime primitive root and the shifted sieve.
\newblock {\em Acta Arith.}, 80(3):277--288, 1997.



\bibitem{Robin}
G.~Robin.
\newblock Estimation de la fonction de Tchebychef $\theta$ sur le $k$-i\`eme nombre premier et grandes valeurs de la fonction $\omega(n)$ nombre de diviseurs de $n$.
\newblock {\em Acta. Arith.}, 42:367--389, 1983.

\bibitem{Shoup}
V.~Shoup.
\newblock Searching for primitive roots in finite fields.
\newblock {\em Math Comp.}, 58(197):369-380, 1992.




\end{thebibliography}
\end{document}